\newcommand{\cmark}{\ding{51}}
\newcommand{\xmark}{\ding{55}}
\definecolor{light-gray}{gray}{0.95}
\newcommand{\R}{\mathbb{R}}
\newcommand{\N}{\mathbb{N}}
\renewcommand{\nl}{\newline}
\newtheorem{theo}{Theroem}
\newtheorem{lemm}{Lemma}
\title{Continuous Convexity Measures}
\author{Abel Douzal  \and Ferdinand Jacobé de Naurois }
\institute{
    DI\'ENS, \'ENS, PSL University, Paris, France\\
45 rue d'Ulm, 75230, Paris \textsc{cedex} 05, France\\
\email{\url{abel.douzal@ens.fr}} 
\and
    DMA\'ENS, \'ENS, PSL University, Paris, France\\
45 rue d'Ulm, 75230, Paris \textsc{cedex} 05, France\\
\email{\url{ferdinand.jacobe.de.naurois@ens.fr}} 
}
\begin{document}
\maketitle 
\small

\begin{abstract}
Methods for measuring convexity defects of compacts in $\mathbb{R}^n$ abound. However, none of the those measures seems to take into account \textsl{continuity}. Continuity in convexity measure is essential for optimization, stability analysis, global optimality, convergence analysis, and accurate modeling as it ensures robustness and facilitates the development of efficient algorithms for solving convex optimization problems. This paper revisits the axioms underlying convexity measures by enriching them with a continuity hypothesis in Hausdorff's sense. Having provided the concept's theoretical grounds we state a theorem underlining the necessity of restricting ourselves to non-point compacts. We then construct a continuous convexity measure and compare it to existing measures.
\end{abstract}

\section{Introduction}

\quad Convexity and continuity are central concepts in mathematical analysis, offering powerful tools for investigating and understanding mathematical structures.
Convexity, rooted in geometry, defines the nature of sets and functions, capturing notions of "upward-curving" behavior.
Continuity, on the other hand, focuses on the smoothness and uninterrupted nature of functions

\quad Convexity is characterized by the property that any line segment connecting two points within the set lies entirely within the set itself. 
This intuitive notion of \enquote{no shortcuts} lends itself to numerous applications in optimization, where convex functions and convex sets facilitate efficient algorithms for finding global optima. The concept of convexity extends beyond classical geometry to vector spaces and functional analysis, 
where convex functions and convex combinations find utility in a wide range of disciplines, computer including graphics, operations research, and machine learning.

\quad Continuity, on the other hand, deals with the smoothness and coherence of functions. A function is said to be continuous if arbitrarily small changes in the input result in small changes in the output. Continuity allows stability analysis, where small perturbations in system inputs lead to small perturbations in outputs, a critical consideration in engineering, physics, and complex systems.



\subsubsection{Analysis of Optimization Problems:}
Convexity plays a crucial role in optimization problems. Continuity in convexity ensures that small changes in the objective function or constraints of a convex optimization problem result in small changes in the optimal solution. This property enables the use of efficient algorithms to find the optimal solution and provides guarantees on the quality of the solution obtained.

\subsubsection{Stability Analysis:} Continuity in convexity allows for stability analysis in mathematical models. Small perturbations or uncertainties in the data or parameters of a convex model lead to small perturbations in the solutions. This property is valuable for assessing the robustness and sensitivity of solutions to changes in the problem inputs.

\subsubsection{Convergence Analysis:} Continuity in convexity facilitates the convergence analysis of optimization algorithms. When the objective function or constraints are continuously convex, various optimization algorithms are known to converge to the optimal solution. This allows for efficient numerical methods to solve convex optimization problems.

\subsubsection{Modeling and Decision-Making:} Convexity is widely used in mathematical modeling to represent real-world problems accurately. Continuity in convexity allows for the modeling of smooth and continuous relationships between variables, enabling better understanding and interpretation of the underlying phenomena. 

\quad This article constructs and give examples of continuous \enquote{convexity measures}. Let us clarify what we mean by this. A convexity measure is a way of measuring the convexity defect of a (compact) part of $\R^n$. 

\quad Numerous methods for measuring the convexity defect exist in the literature: \cite{zunic2004new,stern1989polygonal,rosin2007probabilistic}, the most common hypotheses -- and those which we will use in particular -- in \cite{zunic2004new}. Usually those methods are compared to each other empirically, rather than theoretically.

\quad In this paper, we first define the desirable assumptions, in particular continuity in the Hausdorff sense. We then give a result restricting convexity measures to non-point compacts. Then we present an example of a convexity measure. This measure will be compared to other measures found in the literature, notably via their monotony on $\ell^p$ balls of $\R^2$ for $p\in[0,1]$.\nl

\section{Definitions}
\subsection{Convexity measure and properties studied}
Let $\textbf{C}(\R^n)$ be the set of non-empty compacts of $\mathbb{R}^n$.
We call a \textsl{convexity measure} a function $m: \textbf{C}(\R^n)\mapsto \R$ satisfying the following criteria:
\begin{description}
\item [\textbf{Standardisation 1:}] $m: \textbf{C}(\R^n) \to [0,1] $
\item [\textbf{Standardisation 2:}] $\forall A\in \textbf{C}(\R^n)$, $m(A)=1$ if and only in
if $A$ is convex.
\item [\textbf{Continuity:}] a measure $m: \textbf{C}(\R^n) \to \mathbb{R} $ is continuous if $m(A_n) \xrightarrow[A_n \to A]{} m(A)$ 
\item [\textbf{Stability by similarities:}] the convexity measure is invariant by similarities.
\item [\textbf{Proximity:}] $\exists (A_n)_n \in \textbf{C}(\R^n)$ such as $A_n \xrightarrow[n \to \infty]{} 0$
\end{description}

We will also study the following property, compliance to which is left to be decided according to the use the measurement:\nl
\textbf{\underline{Stability through remoteness}:} Let $C_1, ..., C_k$ the connected components of $A\subset \mathbb{R}^n$. The method $m$  is said to be remotely stable if from a certain distance of the components $C_1, ..., C_k$, the method is constant.\nl

\subsection{Hausdorff's distance}

Hausdorff distance \cite{has} is a fundamental metric in mathematics and computational geometry used to quantify the difference between two sets or shapes. Named after the mathematician Felix Hausdorff, this distance measures the maximum distance between any point in one set and its closest point in the other set. It captures the extent of mismatch or deviation between the two sets, providing a comprehensive measure of their difference. Hausdorff distance finds extensive applications in image processing, pattern recognition, shape analysis, and computer vision, where it enables tasks such as shape matching, object recognition, and evaluation of image segmentation algorithms. By quantifying the difference between sets, Hausdorff distance plays a crucial role in solving geometric problems and enhancing our understanding of the relationships and similarities among objects in various domains.

In this section, we recall the usual distance on $\R^n$ compacts: the Hausdorff distance $d_H$ \cite{has,rockafellar2009variational}, defined as follows:\smallskip

For all $A,B\in\textbf{C}(\R^n)$, let:\smallskip

$$d_H(A,B) = \max \left\{\sup_{a\in A}d(a,A) \;;\; \sup_{b\in B}d(b,B)\right\}$$

This defines a distance on $\textbf{C}(\R^n)$.\nl\nl

The Hausdorff distance is characterized as follows: For $K \in\textbf{C}(\R^n)$ and $\varepsilon > 0$, we denote by $V_\varepsilon(K)$ the $\varepsilon$-neighbourhood of $K$, i.e.:
$$V_{\varepsilon}(K) = \{x\in \R^n\;,\;\;d(x,K) \leq \varepsilon \}$$
It can be seen that if $A,B\in\textbf{C}(\R^n)$, then $d_H(A,B) < \varepsilon$ is equivalent to  $A\subset V_\varepsilon(B)$ and $B\subset V_\varepsilon(A)$.\nl\nl
	Let us now note a fundamental property of distance $d_H$:
	\begin{lemm}
		The set $\mathcal{F}$ of finite parts of $\R^n$ is dense in $\textbf{C}(\R^n)$ for distance $d_H$.
	\end{lemm}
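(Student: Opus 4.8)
The plan is to show that every compact $K\in\textbf{C}(\R^n)$ can be approximated arbitrarily well in the Hausdorff sense by one of its own finite subsets. So I would fix an arbitrary $K\in\textbf{C}(\R^n)$ and an arbitrary $\varepsilon>0$, and aim to exhibit a finite set $F\in\mathcal{F}$ with $d_H(K,F)<\varepsilon$; since $K$ and $\varepsilon$ are arbitrary, this is exactly the density statement.

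First I would exploit compactness to manufacture a finite $\varepsilon/2$-net. The open balls $B(x,\varepsilon/2)$ for $x\in K$ form an open cover of $K$, so by the Borel--Lebesgue property there exist finitely many points $x_1,\dots,x_m\in K$ whose balls already cover $K$. I would then set $F=\{x_1,\dots,x_m\}$, which is finite and hence lies in $\mathcal{F}$, and crucially is a subset of $K$.

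Next I would check the two inclusions demanded by the characterisation of $d_H$ recalled just above the statement. On one hand, every point of $K$ lies in some $B(x_i,\varepsilon/2)$, hence is within $\varepsilon/2$ of $F$, giving $K\subset V_{\varepsilon/2}(F)$. On the other hand, because $F\subset K$ by construction, we have trivially $F\subset K\subset V_{\varepsilon/2}(K)$. These two inclusions yield $d_H(K,F)\le \varepsilon/2<\varepsilon$, which closes the argument; the factor $\varepsilon/2$ is used only to turn the non-strict inequality furnished by the inclusions into the strict one appearing in the characterisation.

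The argument is short, and the only genuine step is the extraction of the finite subcover: the whole point is that compactness of $K$ in $\R^n$ is precisely what allows us to replace the possibly uncountable set $K$ by finitely many of its points while still controlling the covering radius. The one subtlety to keep an eye on is the strict-versus-non-strict inequality in the stated equivalence for $d_H$, which the passage from $\varepsilon$ to $\varepsilon/2$ handles cleanly; everything else is routine.
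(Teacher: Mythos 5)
Your proof is correct and follows essentially the same route as the paper's: compactness of $K$ yields a finite subcover of open balls centred at points of $K$, the centres form the finite set $F\subset K$, and the two inclusions $K\subset V_{\varepsilon}(F)$ and $F\subset K\subset V_{\varepsilon}(K)$ give $d_H(K,F)<\varepsilon$. Your use of radius $\varepsilon/2$ is a minor refinement that handles the strict inequality slightly more cleanly than the paper does, but the argument is the same.
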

	\begin{proof}
	Let $K\in\textbf{C}(\R^n)$. We consider the family $\left(B(x,\varepsilon)\right)_{x\in K}$ of open balls of radius $\varepsilon$ centred at the points of $K$. 
 
 Since:
	$$K\subset \bigcup_{x\in K}B(x,\varepsilon)$$
	is compact, there exists a finite $F\subset K$ such that:
	$$K\subset\bigcup_{x\in F}B(x,\varepsilon)$$
Then, $\forall y\in K$, $\exists x\in F$ such that $y\in B(x,\varepsilon)$ is $d(x,y) < \varepsilon$, which ensures that $K\subset V_\varepsilon(F)$. In the same way, as $F\subset K\subset V_\varepsilon(K)$, we have $d_H(K,F) < \varepsilon$. Therefore the set $\mathcal{F}$ of finite parts of $R^n$ is dense in $\textbf{C}(\R^n)$ for distance $d_H$. 
	\end{proof}

\section{Existence and construction of a convexity measure on the space of non-point compacts}

\subsection{A nonexistence theorem}
\begin{theo}
There is no convexity measure on $\textbf{C}(\R^n)$ which is both continuous for the Hausdorff distance, and invariant by translations and homotheties.
\end{theo}
\begin{proof}
Assume that there exists such a convexity measure $f: \textbf{C}(\R^n) \rightarrow [0,1]$. Let us show that $f$ is constant and equal to $1$. For $n\in\N^*$ or $K\in\textbf{C}(\R^n)$, by homothetic invariance of the measure $f$, we have:
$$f(K) = f\left(\frac{1}{n}K\right)\;\;\;(*)$$ 
As $K$ is compact, $K$ is bounded by a constant $M > 0$. And thus $\forall\varepsilon > 0$, we have $\frac{1}{n}K\subset B(0,\varepsilon) = V_\varepsilon(\{0\})$ as soon as $n > M$. Moreover, $\exists x\in K$ (the elements of $\textbf{C}(\R^n)$ are assumed to be non empty). From then on, $\forall n > \frac{||x||}{\varepsilon}$, we have $d(0,\frac{x}{n}) < \varepsilon $ from which $0 \in V_\varepsilon(K)$.\nl
Thus, for $n\in \N^*$ large enough, we have $\{0\}\subset V_\varepsilon\left(\frac{1}{n}K\right)$ and $\frac{1}{n}K\subset V_\varepsilon\left(\{0\}\right)$, so $d_H\left(\{0\}, \frac{1}{n}K\right) < \varepsilon$. Thus, we have shown that the sequence $\left(\frac{1}{n}K\right)_{n\geq 1}$ converges to $\{0\}$ for the distance $d_H$. By letting $n$ go to $+\infty$ in the equality $(*)$, we obtain by continuity of the function $f$:
$$f(K) = f(\{0\}) = 1$$
because $f(\{0\})$ is convex. This contradicts the property \enquote{arbitrary approach of 0} thereby proving the stated result. 
\end{proof}

The problem in defining a convexity measure on $\textbf{C}(\R^n)$ is the existence of compacts of the plane which are reduced to a point. We are therefore restrict ourselves to the set:
$$\textbf{C}'(\R^n) = \textbf{C}(\R^n)\backslash\{\{x\}\;,\;x\in \R^n\}$$
of $\R^n$ compacts different than point. We will show that on this set, there exists a convexity measure which is both continuous for the Hausdorff distance, and invariant by similarities.
	
\subsection{The set of convexes is Hausdorff-closed}
Let us denote by $\textbf{C}_{\text{Conv}}(\R^n)$ the convex parts of $\R^n$.

\begin{lemm}
The set $\textbf{C}_{\text{Conv}}(\R^n)$ is a closed set of $\textbf{C}(\R^n)$ for the topology induced by the Hausdorff distance.

\end{lemm}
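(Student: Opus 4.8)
The plan is to show that the complement of $\textbf{C}_{\text{Conv}}(\R^n)$ is open, or equivalently, that the limit of any Hausdorff-convergent sequence of convex compacts is itself convex. I would work with the sequential characterization of closedness, which is legitimate because $(\textbf{C}(\R^n), d_H)$ is a metric space. So I would take a sequence $(K_j)_j$ of convex compacts with $K_j \xrightarrow{d_H} K$ and aim to prove that $K$ is convex, i.e. that for any $x,y \in K$ and any $t \in [0,1]$, the point $z = (1-t)x + ty$ lies in $K$.

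The key idea is to approximate $x$ and $y$ by points of the $K_j$ and use the convexity of each $K_j$ to place an approximation of $z$ inside $K_j$, then pass to the limit using that $K$ is closed. Concretely, I would first recall the characterization from the excerpt: $d_H(K_j, K) < \varepsilon$ implies $K \subset V_\varepsilon(K_j)$ and $K_j \subset V_\varepsilon(K)$. Given $x, y \in K$ and $\varepsilon_j = d_H(K_j, K) \to 0$, the inclusion $K \subset V_{\varepsilon_j}(K_j)$ furnishes points $x_j, y_j \in K_j$ with $\|x - x_j\| \le \varepsilon_j$ and $\|y - y_j\| \le \varepsilon_j$. By convexity of $K_j$, the point $z_j = (1-t)x_j + t y_j$ belongs to $K_j$. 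A triangle-inequality estimate then gives
$$\|z - z_j\| \le (1-t)\|x - x_j\| + t\|y - y_j\| \le \varepsilon_j,$$
so $z_j \to z$.

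It then remains to conclude that $z \in K$. Since $z_j \in K_j$ and $K_j \subset V_{\varepsilon_j}(K)$, we have $d(z_j, K) \le \varepsilon_j$, hence $d(z, K) \le \|z - z_j\| + d(z_j, K) \le 2\varepsilon_j \to 0$. Therefore $d(z,K) = 0$, and because $K$ is compact (hence closed), this forces $z \in K$. This establishes convexity of $K$ and so $\textbf{C}_{\text{Conv}}(\R^n)$ is closed.

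I do not anticipate a deep obstacle here; the argument is a standard limiting argument. The main point requiring care is the bookkeeping of the two inclusions defining the Hausdorff distance — using $K \subset V_{\varepsilon_j}(K_j)$ to pull $x,y$ back into $K_j$, and then using $K_j \subset V_{\varepsilon_j}(K)$ to push the convex combination $z_j$ back toward $K$. One should also note that compactness of $K$ (so that it is closed) is exactly what licenses the final step $d(z,K)=0 \Rightarrow z \in K$; without closedness the limit argument would only place $z$ in the closure of $K$.
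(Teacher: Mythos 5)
Your proof is correct, but it takes the dual route to the paper's. The paper proves directly that the complement of $\textbf{C}_{\text{Conv}}(\R^n)$ is open: starting from a non-convex compact $K$ with witness $z_0 = \lambda x_0 + (1-\lambda)y_0 \notin K$, it uses compactness to place $z_0$ outside a closed neighbourhood $\overline{V_\varepsilon(K)}$, then invokes continuity of $(x,y)\mapsto \lambda x + (1-\lambda)y$ to produce an $\eta > 0$ such that every compact $A$ with $d_H(A,K) < \min(\eta,\varepsilon)$ contains points $x_1, y_1$ whose $\lambda$-combination escapes $\overline{V_\varepsilon(K)} \supset A$, so that $A$ is again non-convex. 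You instead prove sequential closedness: a Hausdorff limit of convex compacts is convex, by pulling $x,y \in K$ back into $K_j$, forming the convex combination there, and pushing it back into $K$ in the limit. In a metric space the two formulations are equivalent, so your argument is a complete proof. Yours is shorter and avoids the paper's $\eta$--$\varepsilon$ bookkeeping entirely; the paper's version buys a quantitative statement that yours does not make explicit, namely that every non-convex compact has an entire Hausdorff ball of radius $\min(\eta,\varepsilon)$ consisting of non-convex sets (non-convexity is stable under perturbation, with an explicit radius). Two minor points of care in your write-up: the existence of $x_j \in K_j$ with $\|x - x_j\| \le \varepsilon_j$ uses that the distance to the compact set $K_j$ is attained (or, alternatively, work with $\varepsilon_j + 1/j$); and your final step correctly isolates where closedness of $K$ is used, which is the exact counterpart of the paper's use of compactness to separate $z_0$ from $\overline{V_\varepsilon(K)}$.
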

\begin{proof}
Let us show that the complement of $\textbf{C}_{\text{Conv}}(\R^n)$ is open in $\textbf{C}(\R^n)$. Let $K$ be an element of the complement of $\textbf{C}_{\text{Conv}}(\R^n)$. $K$ is therefore not convex, and $\exists x_0,y_0 \in K$ and $\lambda \in[0,1]$ such that $z_0:= \lambda x_0 + (1-\lambda)y_0\not\in K$. By compactness of $K$, there even exists $\varepsilon > 0$ such that $z_0\notin \overline{V_{\varepsilon}(K)}$. The application:
	\begin{align*}
		K\times K&\rightarrow \R^n\\
		(x,y)&\mapsto \lambda x +(1-\lambda)y
	\end{align*}
	being continuous and $\R^n\backslash \overline{V_{\varepsilon}(K)}$ being open, $\exists \eta > 0$ such that $\forall (x, y)\in (\R^n)^2$ verifying $d(x_0,x) < \eta$ and $d(y_0,y)< \eta$, we have $\lambda x+(1-\lambda )y\notin \overline{V_{\varepsilon}(K)}$. \nl
Let $A$ then be $ \in \textbf{C}(\R^n)$ such that $d_H(A,K) < \min(\eta,\varepsilon)$. Let us show that $A$ is not convex. Since $d_H(A,K) < \eta$, then $K\subset V_{\eta}(A)$ and thus $\exists x_1\in A$ such that $d(x_0,x_1) < \eta$. Similarly, $\exists y_1\in A$ such that $d(y_1,y) < \eta$. Then, by definition of $\eta$, we have $z_1 = \lambda x_1 + (1-\lambda)y_1\notin \overline{V_{\varepsilon}(K)}$. But then, in particular, as $d_H(A,K) < \varepsilon$ we get that $A\subset \overline{V_{\varepsilon}(K)}$, this proves that $z_1\notin A$. Thus, we have $x_1,y_1\in A$ but $\lambda x_1 + (1-\lambda)y_1\in A$ i.e. $A$ is not convex.
As for any $K \in \textbf{C}(\R^n)\backslash \textbf{C}_{\text{Conv}}(\R^n)$ the open ball for distance $d_H$ centered in $K$ and of radius $\min(\eta, \varepsilon)$ is contained in the complement of $\textbf{C}_{\text{Conv}}(\R^n)$, then $\textbf{C}_{\text{Conv}}(\R^n)$ is a closed set of $\textbf{C}(\R^n)$ which concludes the proof.
\end{proof}
	
\subsection{Diameter continuity}
The second step in the construction of our continuous and similitude-invariant convexity measure is the introduction of the \textsl{diameter function}. We define, for all $K \in \textbf{C}(\R^n)$, the diameter of $K$, denoted $\text{diam}(K)$, as follows:$$\text{diam}(K) = \sup_{x,y\in K}d(x,y) = \max_{x,y\in K}d(x,y)$$
the sup being well reached by compactness of $K\times K$ and continuity of the application $(x,y)\mapsto d(x,y)$.\nl
Let us now show the continuity of the application diameter:
\begin{lemm}
The application $\text{diam}(\textbf{C}(\R^n))\rightarrow \R_+$ is continuous for the Hausdorff distance.
\end{lemm}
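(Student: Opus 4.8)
The plan is to establish a quantitative statement that is strictly stronger than continuity, namely that $\text{diam}$ is $2$-Lipschitz for the Hausdorff distance:
$$|\text{diam}(A) - \text{diam}(B)| \leq 2\,d_H(A,B) \qquad \text{for all } A,B\in\textbf{C}(\R^n).$$
Once this inequality is in hand continuity follows at once, since every Lipschitz map is continuous; there is then no need to argue with convergent sequences $A_n \to A$ directly.

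To prove the inequality I would fix $A,B\in\textbf{C}(\R^n)$ and let $\varepsilon > d_H(A,B)$ be arbitrary, so that by the characterization of $d_H$ recalled above we have $A\subset V_\varepsilon(B)$, i.e. every point of $A$ lies within distance $\varepsilon$ of some point of $B$. By compactness the supremum defining $\text{diam}(A)$ is attained, so I choose $a_1,a_2\in A$ with $d(a_1,a_2)=\text{diam}(A)$. Using $A\subset V_\varepsilon(B)$, pick $b_1,b_2\in B$ with $d(a_1,b_1)\leq\varepsilon$ and $d(a_2,b_2)\leq\varepsilon$. The triangle inequality then gives
$$\text{diam}(A) = d(a_1,a_2) \leq d(a_1,b_1) + d(b_1,b_2) + d(b_2,a_2) \leq \text{diam}(B) + 2\varepsilon.$$
Letting $\varepsilon$ decrease to $d_H(A,B)$ yields $\text{diam}(A) \leq \text{diam}(B) + 2\,d_H(A,B)$.

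Exchanging the roles of $A$ and $B$, and using now the inclusion $B\subset V_\varepsilon(A)$, gives the symmetric bound $\text{diam}(B) \leq \text{diam}(A) + 2\,d_H(A,B)$. Combining the two inequalities produces the claimed Lipschitz estimate, and hence the continuity asserted by the lemma.

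I do not anticipate a genuine obstacle here; the only points requiring a little care are the direction of the set inclusions — using $A\subset V_\varepsilon(B)$ (rather than the reverse) precisely when approximating the extremal points of $A$ by points of $B$ — and the handling of the strict-versus-nonstrict inequality in the definition of $d_H$, which is cleanly dealt with by taking $\varepsilon\downarrow d_H(A,B)$ at the end rather than working with $\varepsilon = d_H(A,B)$ directly.
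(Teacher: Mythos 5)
Your proposal is correct and follows essentially the same route as the paper: both prove the $2$-Lipschitz estimate by attaining $\text{diam}(A)$ at a pair of points via compactness, approximating those points in $B$ using the inclusion $A\subset V_\varepsilon(B)$, applying the triangle inequality, and then symmetrizing in $A$ and $B$. Your handling of the limit $\varepsilon\downarrow d_H(A,B)$ is a slightly cleaner treatment of the strict-versus-nonstrict inequality than the paper's, but the argument is the same.
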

\begin{proof}
By compactness of $A$, for $A,B \in \textbf{C}(R^n)$ and $A,B \in \textbf{C}(R^n)$ such that $d_H(A,B)\leq \varepsilon $, $\exists x,y\in A$ such that $d(x,y) = \text{diam}(A)$.
From then on, as $A\subset  V_{\varepsilon }(B)$, $\exists x',y'\in B$ such that $d(x,x') < \varepsilon$ and $d(y,y') < \varepsilon$. We then have:
$$\text{diam}(B) \geq d(x',y')\geq d(x,y) - d(x,x')-d(y,y') =$$
$$ \text{diam}(A) - 2\varepsilon$$
And therefore: $\text{diam}(A) - \text{diam}(B) < 2\varepsilon$. By switching the roles of $A$ and $B$, we obtain:
$$| \text{diam}(A)-\text{diam}(B)| < 2\varepsilon$$
This being valid for all $A,B$ and $\textbf{C}(\R^n)$ such that $d_H(A,B) < \varepsilon$, we have shown that $\text{diam}$ is a $2$-Lipschitz application, thus continuous for distance $d_H$ which concludes the proof.
\end{proof}

\subsection{Existence of the measure}
\begin{theo}
There is a convexity measure $f: \textbf{C}(\R^n)\rightarrow [0,1]$ which is continuous for the Hausdorff distance.
\end{theo}
\begin{proof}
To optimally position our purpose let us restate some about metric spaces. Recall that a metric space consists of a set of points and a distance function, called a metric, that assigns a non-negative real number to each pair of points. This metric satisfies certain properties, including non-negativity, symmetry, and the triangle inequality, which captures the idea that the distance between two points is always shorter than or equal to the sum of the distances between those points and a third point. 

Given a metric space $(X,d)$, we can define for any part $A \subseteq X$ the function $d_A$, \enquote{distance to $A$}, by:
$$\forall x\in A,\;\;d_A(x) = \inf_{y\in A}d(x,y)$$
It is easy to see that the function $d_A$ is Lipschitz, and therefore continuous, and that it cancels exactly at the points of $\overline A$. In particular, if $A$ is closed in $X$, then $d_A$ is a continuous function whose set of cancellation points is exactly $A$.
Now apply this to the closed $\textbf{C}_{\text{Conv}}(\R^n)$ of $\textbf{C}(\R^n)$. The function:
	\begin{align*}
		\textbf{C}(\R^n)&\rightarrow \R_+\\
		K &\mapsto d_{H,\textbf{C}_{\text{Conv}}(\R^n)}(K)
	\end{align*}
		is continuous, positive, and takes the value $0$ exactly on the set $\textbf{C}_{\text{Conv}}(\R^n)$. As $\text{diam}$ is continuous and does not cancel on $\textbf{C}'(\R^n)$, we can define a continuous function of $\textbf{C}'(\R^n)$ in $[0,1]$, taking the value $1$ exactly on the convexes of $\R^n$ not reduced to a point:
	\begin{align*}
		f: \textbf{C}'(R^n) &\rightarrow [0,1]\\
		K&\mapsto \frac{\text{diam}(K)}{\text{diam}(K) + d_{H,\textbf{C}_{\text{Conv}}(\R^n)}(K)}	
\end{align*}
It remains to see that the function $f$ defined so is invariant by $\R^n$-similarities. For isometries, this is an immediate consequence of the fact that $\text{diam}(K)$ and $d_H$ are invariant by isometries of $\R^n$. For homotheties the invariance follows from:
$$\text{diam}(\lambda K) = \lambda\text{diam}(K)$$$$ d_{H,\textbf{C}_{\text{Conv}}(\R^n)}(\lambda K) = \lambda d_{H,\textbf{C}_{\text{Conv}}(\R^n)}( K) $$
$\forall K\subset \R^n$ compact and all $\forall\lambda \geq 0$
\end{proof}

We therefore define $m_\text{cihi}$ a continuous convexity measure invariant by similarities by:
$$\boxed{\forall K\in\textbf{C}'(\R^n)\;,\;\;\; m_\text{cihi}(K) = \frac{\text{diam}(K)}{\text{diam}(K) + d_{H,\textbf{C}_{\text{Conv}}(\R^n)}( K) }}$$
where:
$$d_{H,\textbf{C}_{\text{Conv}}(\R^n)}( K) =$$$$\inf_{C \text{convex compact}}\inf\{\varepsilon > 0\;|\;K\subset V_{\varepsilon}(C)\;\land C\subset V_{\varepsilon}(K)\}  $$

\section{Catalogue \& comparison of some convexity measures in $\mathbb{R}^2$}
We now present several other convexity measures, some of which are already present in the literature \cite{zunic2004new,rosin2006symmetric}, and compare them. These measures have weaker properties than those of the \underline{c}ontinuous convexity measure \underline{i}nvariant by \underline{h}omotheties and \underline{i}sometries -- which we will note $m_\text{cihi}$. These convexity measures are then compared on $\mathbb{R}^2$ compacts which are the $\ell^p$ balls, whose \enquote{intuitive} level of convexity is parameterized by the real $p\in [0,1]$. In a last step we will consider the possibility of computing these measures efficiently.\smallskip

It will appear that the measure $m_\text{cihi}$, although endowed with good properties, is not easily computable with respect to current results, which restricts its practical use.\\

\subsection{Catalogue and notations}
In this section, we consider $A\in \mathbf{C'}(\mathbb{R}^n)$. Moreover $A\subset \mathbf{R}^n$ we will note $\text{co}(A)$ the convex envelope of $A$. Finally we will denote by $\lambda$ the Lebesgue measure in $\mathbb{R}^n$. \smallskip

The following is a non-exhaustive catalogue of convexity measures found in the literature, or of intuitive interest. \\
 
First, the continuous similitude-invariant measure:
$$\boxed{m_\text{cihi}(A) = \frac{\text{diam}(A)}{\text{diam}(A) + d_{H,\textbf{C}_{\text{Conv}}(\R^n)}( K) }}$$
where: 
$$d_{H,\textbf{C}_{\text{Conv}}(\R^n)}( K) =$$
$$\inf_{C \text{ compact convex}}\inf\{\varepsilon > 0\;|\;K\subset V_{\varepsilon}(C)\;\land C\subset V_{\varepsilon}(K)\}$$\\\\

We are also interested in:

\begin{enumerate}
\item $m_\text{prob}(A)=$ $$\mathbb{P}([X,Y] \subset A) \in [0,1] \quad X,Y \sim \mathcal{U}(A) \ \text(iid)$$ 
defined in \cite{rahtu2006new}
\item $m_\text{env}(A)=$ $$\frac{\lambda(A)}{\lambda({\text{co}(A)})} \in [0,1]$$
\item $m_\text{maxdist}(A)=$ $$\frac{-1}{1+ \underset{x\in \text{co}(A)\backslash A }{\sup d(x,A)}}+1 \in [0,1]$$

\item $m_\text{CE}(A)=$ $$\underset{\underset{E \text{ convex}}{, E\subset \text{co}(A)\backslash A}}{\sup} \frac{\lambda(E)}{\lambda(\text{co}(A))} \in [0,1]$$
\item $m_\text{CI}(A)=$ $$\underset{E \text{convex},\  E\subset A}{\inf} \frac{\lambda(E)}{\lambda(A)} \in [0,1]$$ defined for $\lambda(A)\neq 0$.
\end{enumerate}

\newpage
\subsection{Study of convexity measures on $\ell^p(\mathbb{R}^2)$ balls}
We are interested here in the above-mentioned convexity measures of the unit balls for the norms $\vert . \vert \vert _p$ for $p<1$. The main property of interest in this paragraph is the monotonicity as a function of $p$ of the convexity measures of these balls.
\begin{figure}[H]
  	\centering
	\includegraphics[width=100mm]{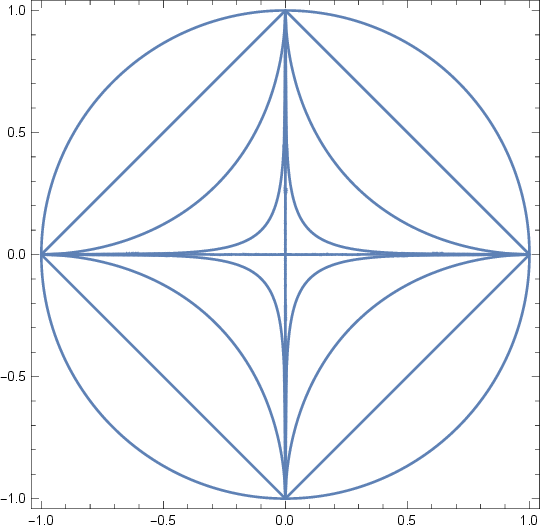}
	\caption{$\ell^p$ balls for $p\in \{ 0.1, 0.3, 0.5, 1, 2\}$}
	\label{fig1}
\end{figure}
The $\ell^p$ balls for $p \geq 1$ are convex. In the case where $p is]0,1[$, they are not, but even more so, the level of pinching and hence of non-convexity seems to increase as $p \to 0$ as shown in \ref{fig1}.

The measures of the balls are expected to converge to a certain limit as $p \to 0$: see \ref{fig2} and this monotonically, with a speed dependant of the measure.

However, this is not the case for all the proposed measures, including the measure $m_\text{prob}$ which, apart from tending to $1/2$ for $p \to 0$, is non-monotonic as shown in \ref{fig3}.

\begin{figure}[H]
  	\centering
	\includegraphics[width=120mm]{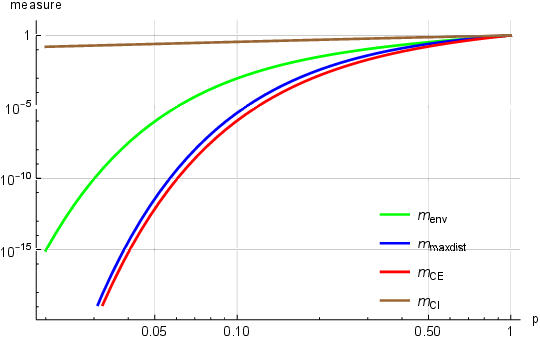}
	\caption{Measures of convexity according to $p$ (log scale) of $\ell^p$ balls}
	\label{fig2}
\end{figure}

\begin{figure}[H]
  	\centering
	\includegraphics[width=120mm]{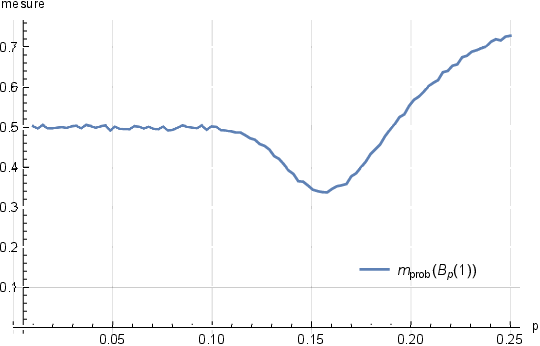}
	\caption{Measurement of $\ell^p$ balls by $m_\text{prob}$}
	\label{fig3}
\end{figure}

\begin{figure}[H]
  	\centering
	\includegraphics[width=120mm]{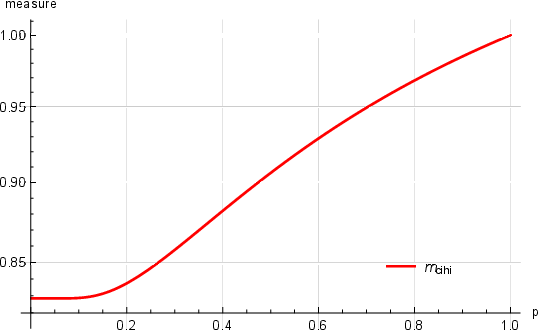}
	\caption{Measurement of $\ell^p$ balls by $m_\text{cihi}$}
	\label{fig4}
\end{figure}

\begin{figure}[H]
  	\centering
	\includegraphics[width=120mm]{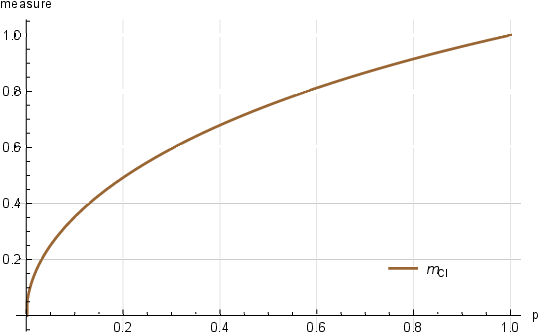}
	\caption{Measurement of $\ell^p$ balls by $m_\text{CI}$}
	\label{fig5}
\end{figure}

By denoting $L_p$ the unit ball for the norm $p$, we obtain the following expressions for convexity measures as a function of $p$ which are plotted in figures \ref{fig2} and \ref{fig4}:
$$m_\text{cihi}(L_p)=\frac{\sqrt{2}+1}{\sqrt{2}+3 / 2-\sqrt[p]{1 / 2}}$$
$$m_\text{CE}(L_p)=\frac{\Gamma\left(\frac{1}{p}\right)^{2}}{\Gamma\left(\frac{2}{p}\right)}$$
$$m_\text{maxdist}(L_p)=4^{1-\frac{1}{p}}$$
$$m_\text{env}=\frac{1}{2^{1/p}-1}$$
$$m_\text{CI}=\frac{2^{2-\frac{2}{p}} p\Gamma\left(\frac{2}{p}\right)}{\Gamma\left(\frac{1}{p}\right)^{2}}=\frac{2^{2-\frac{1}{p}} p}{m_\text{CE}(L_p)}$$

\subsection{Comparison of properties of catalogued measures}
The following functions are continuous:
\begin{itemize}
\item $B: \left\{
  \begin{array}{clc}
    \mathbb{R}^n \times \mathbb{R}_+ & \to & \mathbf{C}(\mathbb{R}^n) \\
   (x,r) & \mapsto & \{y\in  \mathbb{R}^n \vert d(x,y)<r\} \\
  \end{array}
\right.$
\item $\text{co}: \left\{
  \begin{array}{clc}
    \mathbf{C}(\mathbb{R}^n) & \to & \mathbf{C}(\mathbb{R}^n) \\
   A & \mapsto & \underset{A \subset E, E \text{convex} }\bigcap E \\
  \end{array}
\right.$

\item $\lambda$ the Lebesgue measure on $\mathbf{C}(\mathbb{R}^n)$.
\item $ d$ the Euclidean distance on $\mathbb{R}^n$.
\item For any real continuous $\varphi$ function, is continuous: 
$\left\{
  \begin{array}{clc}
    \mathbf{C'}(\mathbb{R}^n) & \to & \mathbb{R} \\
   A & \mapsto & \underset{E \text{convex} \subset A }\sup \varphi(E) \\
  \end{array}
\right.$
\end{itemize}

In general, the convexity measures defined from a part evolving in $\text{co}(A)\backslash A$ are not stable by distance, the size of the convex envelope being able to increase while leaving the Lebesgue measure of $A$ unchanged. \emph{A contrario}, the measures defined by objects evolving in $A$ are stable by distance.

Detailed results are summarised in Table \ref{fig4}.

\begin{enumerate}
\item $m_\text{cihi}$ is stable by similarities and continuous. The calculation of this measure is however more difficult. 

\item $m_\text{prob}$ is stable by moving away the related components from a certain distance between the parts. This measure has the advantage of being easily computable by Monte-Carlo methods when the ratio between the measure of the smallest paving stone containing $A$ and the measure of $A$ is small (i.e. $\simeq 1$), in the opposite case (example of the ball $l^{0.1}$ \ref{fig1}), the difficulty of carrying out a sampling within the set makes the computation cumbersome without additional refinements. $m_\text{prob}$ may also be computable by an analogous principle via a grid approximation. $m_\text{prob}$ is stable by isometries and homotheties, but not continuous. The non-continuity follows from the argument of two half-balls tending to the same ball.

\item $m_\text{env}$ is not stable by distance, the Lebesgue measure of the convex envelope increasing as the distance between the connexal components increases. $m_\text{env}$ is however stable by isometries and homotheties, by the same argument as for $m_\text{prob}$. $m_\text{env}$ is continuous because $\text{co}$ and $\lambda$ are continuous on $\mathbf{C'}(\mathbb{R}^n)$. The calculation of the convex envelope can be done with the QuickHull algorithm \cite{qh} in mean time $O(n\log(n))$, followed by two volume measurements via a Monte-Carlo method as shown for $m_\text{prob}$.

\item $m_\text{maxdist}$ is continuous because $\text{co}$ and $d$ are continuous, unstable by remoteness and not stable by homotheties.

\item $m_\text{CE}$ is continuous by continuity of $\lambda$ and $\text{co}$, unstable by remoteness, and stable by isometries.

\item $m_\text{CI}$ is stable by similarities, continuous,  unstable by remoteness. However, $m_\text{CI}$ is not well defined for sets of measure zero.

\end{enumerate}

\begin{figure}[h]
	\centering
	\begin{tabular}{|c|c|c|c|c|c|c|}
\hline~~\textbf{property}~~&~~$m_\text{cihi}$~~&~~$m_\text{prob}$~~&~~$m_\text{env}$~~&~~$m_\text{maxdist}$~~&~~$m_\text{CE}$~~& ~~$m_\text{CI}~~$ 
\\\hline~~continuity~~& \cmark & \xmark & \cmark & \cmark & \cmark & \cmark 
\\\hline~~stability by isometries~~& \cmark & \cmark & \cmark & \cmark & \cmark & \cmark 
\\\hline~~stability by homotheties~~& \cmark & \cmark & \cmark & \xmark & \cmark & \cmark 
\\\hline~~stability by remoteness~~& \cmark & \cmark & \xmark & \xmark & \xmark & \cmark 
\\\hline~~large definition domain~~& \cmark & \cmark & \cmark & \cmark & \cmark & \xmark 
\\\hline~~monotonicity on the $\ell^p$-balls~~& \cmark & \xmark & \cmark & \cmark & \cmark & \cmark 
\\\hline \end{tabular}
	\caption{Summary of properties of catalogued measures}
    \cmark : the property is fulfilled ; \xmark : the property is not fulfilled
	\label{fig6}
\end{figure}

\section{Conclusion}
We have introduced a novel measure, denoted as $m_\text{cihi}$, which possesses the desirable properties of a convexity measure, specifically continuity. By carefully constraining the domain of definition, we have achieved the continuity requirement. Comparing $m_\text{cihi}$ with existing convexity measures from the literature, we have discovered that it outperforms them in terms of meeting the defined criteria for convexity measurement.

Expanding our perspective, we can delve into the characterization of all measures that satisfy the assumptions of a continuous convexity measure from a different standpoint. Are there families of measures that exhibit these properties, or can we parameterize them to gain a deeper understanding? Additionally, an intriguing question arises concerning the computational efficiency of the measures within this family. Can we identify which measures can be computed efficiently, thereby paving the way for practical applications and computational implementations? By exploring these avenues, we can further enhance our understanding of convexity measures and uncover valuable insights for both theoretical analysis and practical usage.

\bibliographystyle{plain}
\bibliography{bibliographie}

\begin{thebibliography}{1}

\bibitem{qh}
C.~Bradford Barber, David~P. Dobkin, and Hannu Huhdanpaa.
\newblock The quickhull algorithm for convex hulls.
\newblock {\em ACM Trans. Math. Softw.}, 22(4):469–483, dec 1996.

\bibitem{has}
Felix Hausdorff.
\newblock {\em Grundz{\"u}ge der mengenlehre}, volume~7.
\newblock von Veit, 1914.

\bibitem{rahtu2006new}
Esa Rahtu, Mikko Salo, and Janne Heikkila.
\newblock A new convexity measure based on a probabilistic interpretation of
  images.
\newblock {\em IEEE Transactions on Pattern Analysis and Machine Intelligence},
  28(9):1501--1512, 2006.

\bibitem{rockafellar2009variational}
R~Tyrrell Rockafellar and Roger J-B Wets.
\newblock {\em Variational analysis}, volume 317.
\newblock Springer Science \& Business Media, 2009.

\bibitem{rosin2006symmetric}
Paul~L Rosin and Christine~L Mumford.
\newblock A symmetric convexity measure.
\newblock {\em Computer Vision and Image Understanding}, 103(2):101--111, 2006.

\bibitem{rosin2007probabilistic}
Paul~L Rosin and Jovi{\v{s}}a {\v{Z}}uni{\'c}.
\newblock Probabilistic convexity measure.
\newblock {\em IET image processing}, 1(2):182--188, 2007.

\bibitem{stern1989polygonal}
Helman~I Stern.
\newblock Polygonal entropy: a convexity measure.
\newblock {\em Pattern Recognition Letters}, 10(4):229--235, 1989.

\bibitem{zunic2004new}
Jovisa Zunic and Paul~L Rosin.
\newblock A new convexity measure for polygons.
\newblock {\em IEEE Transactions on Pattern Analysis and Machine Intelligence},
  26(7):923--934, 2004.

\end{thebibliography}

\end{document}